\numberwithin{equation}{section} \theoremstyle{plain}
\newtheorem{thm}{Theorem}[section]
\newtheorem{lem}[thm]{Lemma}
\newtheorem{ack}{Acknowledgements}   
\def\<{\langle}
\def\>{\rangle}
\def\({\left(}
\def\){\right)}
\def\[{\left[}
\def\]{\right]}
\title[DDVV inequality for Hermitian matrices]{DDVV-type inequality for Hermitian matrices}
\author[J.Q. Ge]{Jianquan Ge}
\address{School of Mathematical Sciences, Laboratory of Mathematics and Complex Systems, Beijing Normal
University, Beijing 100875, P.R. CHINA.}
\email{jqge@bnu.edu.cn}
\author[S. Xu]{Song Xu}
\email{mike9710@163.com}
\author[H. Y. You]{Hangyu You}
\email{779640002@qq.com}
\author[Y. Zhou]{Yi Zhou}
\email{zyzy8369@163.com}
\subjclass[2010]{15A45, 15B57, 53C42.}
\date{}
\keywords{DDVV inequality; Hermitian matrices; Commutator.}
\thanks{The first author is partially supported by the NSFC (No. 11331002, 11522103) and by the Fundamental Research Funds for the Central Universities. }
\begin{document}
\maketitle

%%%%%%%%%%%%%%%%%%%%%
\begin{abstract}
In this paper we extend DDVV-type inequalities involving the Frobenius norm of commutators from real symmetric and skew-symmetric matrices to Hermitian and skew-Hermitian matrices.
\end{abstract}
%%%%%%%%%%%%%%%%%%%%%%%

%%%%%%%%%%%%%%%%%%%%%%
\section{Introduction}\label{introduction}
A DDVV-type inequality is an estimate of the form (cf. \cite{Lu07, LW11, GT11})
\begin{equation}\label{DDVVtypeineq}
\sum^m_{r,s=1}\|[B_r,B_s]\|^2\leq c\(\sum^m_{r=1}\|B_r\|^2\)^2,
\end{equation}
considered for certain $n\times n$ matrices $B_1\ldots,B_m$, where $[A,B]=AB-BA$ is the commutator and
$\|B\|^2=\operatorname{tr}(BB^*)$ is the squared Frobenius norm.
Naturally, one is interested in the best constant $c$ so that (\ref{DDVVtypeineq}) stays valid for all
matrices in the regarded class. The name has its origin in submanifold geometry. The DDVV conjecture concerns the following inequality between the scalar curvature $\rho$ (intrinsic invariant), the mean curvature $H$ and the normal scalar curvature $\rho^\bot$ (extrinsic invariants) of a submanifold $M^n$ in a real space form $N^{n+m}(\kappa)$ with constant sectional curvature $\kappa$ (cf. \cite{DDVV99}):
$$\rho+\rho^\bot\leq\|H\|^2+\kappa.$$
Now, the equivalent algebraic version is
precisely (\ref{DDVVtypeineq}) with the value $c=1$. As the problem has a geometric background,
only real symmetric matrices are taken into account. This claim was shown by Lu \cite{Lu11} and Ge-Tang \cite{GT08} independently and differently.
The matrix tuples giving equality were completely determined and under some rotation and orthogonal congruence, the matrices of such a tuple are all zero except for two in the form of $diag(A_1,0)$ and $diag(A_2,0)$, where $0$ is the zero matrix and
$$A_1:=
\begin{pmatrix}
    \lambda & 0 \\
    0 & -\lambda \\
\end{pmatrix},
\quad
A_2:=
\begin{pmatrix}
    0 & \lambda \\
    \lambda & 0 \\
\end{pmatrix}
$$ with $\lambda\geq0$. The equality also has a geometric interpretation: submanifolds achieving the equality everywhere are called Wintgen ideal submanifolds which are invariant under conformal transformations and are not yet classified so far, although many partial results and studies are available in the literature (cf. \cite{CL08, DT09, XLMW14}, etc.).
Recently, the geometric DDVV inequality was strengthened on the focal submanifolds of isoparametric hypersurfaces in unit spheres. The points attaining the equality were explicitly calculated, in particular, those focal submanifolds which are Wintgen ideal submanifolds were classified in \cite{GTY16}.

 After the real symmetric matrices (which occur in the second fundamental tensor in submanifold geometry), real skew-symmetric matrices were investigated in \cite{Ge14} for the DDVV-type inequality (\ref{DDVVtypeineq}), since they naturally occur in the integrability tensor in Riemannian submersion geometry.
There is an interesting phenomenon of ``dualities": Lu \cite{Lu11} applied the DDVV inequality for symmetric matrices to give Simons type inequality for minimal submanifolds of spheres in submanifold geometry; Ge \cite{Ge14} established the DDVV-type inequality for skew-symmetric matrices and applied it to give a Simons-type inequality for Yang-Mills fields in Riemannian submersion geometry. Simply speaking, the dualities appear in algebra as symmetric to skew-symmetric, and in their applications in geometry as immersion (submanifold) to submersion. The dual phenomenon between the objects (minimal submanifolds and Yang-Mills fields) of applications of algebra to geometry was initially investigated by Tian \cite{Ti00}.
Now, for the skew-symmetric class, when $m\geq3$, $c=1/3$ for $n=3$ and $c=2/3$ for $n\geq4$. Similarly, for obtaining equality all but three matrices must be zero, and these are again given via transforming the simplest representatives of the skew-symmetric class. In the case $m=2$ the best constant can be shown to be even smaller, $c=1/4$ for $n=3$ and $c=1/2$ for $n\geq4$; see \cite{BI05}\footnote{Notice that in \cite{BI05} there is a scaling of the norm by a factor of $\sqrt{2}$ to the Frobenius norm.} (see also Lemma 2.5 of \cite{Ge14}), which showed that
$\|[B_1, B_2]\|^2\leq \tilde{c}\|B_1\|^2\|B_2\|^2 $ with $\tilde{c}=1/2$ for $n=3$ and $\tilde{c}=1$ for $n\geq4$. In fact, this can be seen easily from
$$2\|[B_1, B_2]\|^2\leq 2\tilde{c}\|B_1\|^2\|B_2\|^2\leq \frac{\tilde{c}}{2}(\|B_1\|^2+\|B_2\|^2)^2.$$

Indeed, when $m=2$, the problem is closely related to the B\"{o}ttcher-Wenzel inequality (cf. \cite{BW05,BW08}, see also \cite{VJ08, Lu11, Lu12}): $\|[X, Y]\|^2\leq 2\|X\|^2\|Y\|^2$, which is even true for arbitrary complex matrices (cf. \cite{BW08}).
A unified generalization of the DDVV inequality and the B\"{o}ttcher-Wenzel inequality has been conjectured and ongoingly studied by Lu and Wenzel \cite{LW11}.

In this paper, we extend the DDVV-type inequalities from real matrices to complex matrices.
As in the real case, we consider the complex matrices with symmetries, namely, the Hermitian matrices and the skew-Hermitian matrices.
In fact, since $\mathbf{i} B$ is skew-Hermitian for any Hermitian matrix $B$, the inequality for the skew-Hermitian case is the same as the Hermitian case, which is slightly different from the real case.

Throughout this paper, we put $K:=U(n)\times O(m)$. A $K$ action on a family of matrices $(B_1,\cdots,B_m)$ is given by
$$(P,R)\cdot(B_1,\cdots,B_m):=\left(\sum_{j=1}^mR_{j1}P^*B_jP,\cdots,\sum_{j=1}^mR_{jm}P^*B_jP\right),$$ for $(P,R)\in K$, where $R=(R_{jk})\in O(m)$ acts as a rotation on the matrix tuple $(P^*B_1P,\cdots,P^*B_mP)$.
\begin{thm}\label{DDVVHermthm}
Let $B_1,\cdots,B_m$ be $n\times n$ Hermitian matrices.
\begin{enumerate}
\item If\ $m\geq3$, then $c=\frac43$ in $(\ref{DDVVtypeineq})$, i.e.,
$$
\sum^m_{r,s=1}\|\[B_r,B_s\]\|^2\leq \frac43\(\sum^m_{r=1}\|B_r\|^2\)^2.
$$
The equality holds if and only if under some $K$ action all $B_r$'s are zero except for $3$
matrices in the form of $diag(H_1,0)$, $diag(H_2,0)$ and $diag(H_3,0)$, where
$$H_1:=
\begin{pmatrix}
\lambda & 0 \\
0 & -\lambda \\
\end{pmatrix},
H_2:=
\begin{pmatrix}
0 & \lambda \\
\lambda & 0 \\
\end{pmatrix},
H_3:=
\begin{pmatrix}
0 & -\lambda\mathbf{i} \\
\lambda\mathbf{i} & 0 \\
\end{pmatrix},
$$
with $\lambda\geq0$.
\item If $m=2$, then $c=1$ in $(\ref{DDVVtypeineq})$, i.e.,
$$
\sum^2_{r,s=1}\|\[B_r,B_s\]\|^2\leq \(\sum^2_{r=1}\|B_r\|^2\)^2.
$$
The equality holds if and only if under some $K$ action $B_1=diag(H_1,0)$ and $B_2=diag(\cos \theta H_2+\sin \theta H_3,0)$.
\end{enumerate}

When considering skew-Hermitian matrices $B_r$, the same statements are true with $H_j$ replaced by
$\mathbf{i}H_j$, $j=1,2,3$.
\end{thm}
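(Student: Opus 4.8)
The plan is to extract the three upper bounds from the Böttcher--Wenzel inequality for arbitrary complex matrices and to deduce the equality cases from its rigidity; the skew-Hermitian assertion will then be automatic, and the only genuinely hard part will be the range $m\ge 4$.

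First I would dispose of the skew-Hermitian case: if $B_1,\dots,B_m$ are skew-Hermitian then $\mathbf{i}B_1,\dots,\mathbf{i}B_m$ are Hermitian, $[\mathbf{i}B_r,\mathbf{i}B_s]=-[B_r,B_s]$ and $\|\mathbf{i}B_r\|=\|B_r\|$, so both sides of $(\ref{DDVVtypeineq})$ are unchanged and the extremal tuples correspond under $H_j\leftrightarrow\mathbf{i}H_j$; hence it suffices to treat Hermitian $B_r$, and I write $x_r:=\|B_r\|^2$. The analytic engine is the Böttcher--Wenzel inequality $\|[X,Y]\|^2\le 2\|X\|^2\|Y\|^2$, valid for arbitrary complex $X,Y$ (cf.\ \cite{BW08}), together with its equality case, for which I would record the following Hermitian version, proved by a direct $2\times2$-block computation: nonzero Hermitian matrices $X,Y$ attain equality if and only if, after a $U(n)$-conjugation, they are supported on one common $2\times 2$ block and there are both \emph{traceless} and \emph{anticommuting}. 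Indeed, writing a Hermitian $2\times2$ matrix as $X=x_0I+\vec x\cdot\vec\sigma$ one has $[\vec x\cdot\vec\sigma,\vec y\cdot\vec\sigma]=2\mathbf{i}(\vec x\times\vec y)\cdot\vec\sigma$, so within a single block equality means equality throughout $|\vec x\times\vec y|^2\le|\vec x|^2|\vec y|^2\le(x_0^2+|\vec x|^2)(y_0^2+|\vec y|^2)$, i.e.\ $x_0=y_0=0$ and $\vec x\perp\vec y$, while the known structure of the Böttcher--Wenzel equality locus confines the support to a common $2\times2$ block.

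For $m=2$ this yields at once
$$\sum_{r,s=1}^{2}\|[B_r,B_s]\|^2=2\|[B_1,B_2]\|^2\le 4x_1x_2\le (x_1+x_2)^2 ,$$
i.e.\ $c=1$, with equality forcing $x_1=x_2$ together with equality in Böttcher--Wenzel; by the fact above and a $K$-action this gives the pair $\operatorname{diag}(H_1,0),\ \operatorname{diag}(\cos\theta H_2+\sin\theta H_3,0)$. For $m=3$,
$$\sum_{r,s=1}^{3}\|[B_r,B_s]\|^2=2\sum_{r<s}\|[B_r,B_s]\|^2\le 4\sum_{r<s}x_rx_s\le\tfrac43\Big(\sum_r x_r\Big)^{2},$$
the last step being the elementary inequality $3\sum_{r<s}x_rx_s\le(\sum_r x_r)^2$, equivalently $\sum_{r<s}x_rx_s\le\sum_r x_r^2$ (add the bounds $2x_rx_s\le x_r^2+x_s^2$), with equality iff $x_1=x_2=x_3$. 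Hence $c=\tfrac43$, and equality forces the three $B_r$ to have equal norm and each pair to saturate Böttcher--Wenzel; since three pairwise anticommuting traceless Hermitian matrices, each pair supported on a common $2\times2$ block, must in fact all share a single block, and in that block $\vec v\cdot\vec\sigma\mapsto\vec v$ turns ``anticommuting'' into ``orthogonal'', an $SU(2)\subset U(n)$ acting as $SO(3)$ together with an $O(3)$-rotation of the tuple normalises $(B_1,B_2,B_3)$ to $(\operatorname{diag}(H_1,0),\operatorname{diag}(H_2,0),\operatorname{diag}(H_3,0))$ with $\lambda\ge 0$.

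The remaining range $m\ge 4$ is the crux, and I expect it to be the main obstacle: the pairwise bound $\sum_{r,s}\|[B_r,B_s]\|^2\le 4\sum_{r<s}x_rx_s$ is now too weak, since $4\sum_{r<s}x_rx_s>\tfrac43(\sum_r x_r)^2$ whenever the $x_r$ are spread over four or more indices (e.g.\ when they are equal), and averaging the $m=3$ inequality over the $\binom m3$ index-triples likewise only returns $c=\tfrac43$ for $m\le 3$. My plan here is the variational route: the supremum of $f:=\sum_{r,s}\|[B_r,B_s]\|^2$ over $\{\sum_r\|B_r\|^2=1\}$ is attained, and at a maximiser the Lagrange condition reads
$$2\sum_{s=1}^{m}\big[[B_r,B_s],B_s\big]=\mu\,B_r\qquad(r=1,\dots,m),$$
with $\mu=2f$ after pairing with $B_r$ and summing (using the identity $\langle[[B_r,B_s],B_s],B_r\rangle=\|[B_r,B_s]\|^2$). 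The task — and the hard part — is to analyse the solutions of this nonlinear eigenvalue system and show $f\le\tfrac43$, with equality forcing, after a $K$-action, all but three of the $B_r$ to vanish and the surviving three to be the $\operatorname{diag}(H_j,0)$ above; intuitively the maximising configuration must collapse into a single $2\times2$ block, inside which at most three traceless Hermitian matrices can be mutually anticommuting, so the $m=3$ picture reappears. Carrying this through parallels, and should demand the same care as, the variational proofs of the real DDVV inequality by Ge--Tang \cite{GT08} and Lu \cite{Lu11} and of its skew-symmetric analogue by Ge \cite{Ge14}, the persistent difficulty throughout being the impossibility of simultaneously saturating Böttcher--Wenzel on many pairs at once.
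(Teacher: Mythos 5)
Your skew-Hermitian reduction, your $m=2$ derivation, and in particular your $m=3$ argument are correct, and the $m=3$ route via the B\"ottcher--Wenzel inequality for each pair together with $\sum_{r<s}x_rx_s\le\sum_r x_r^2$ is genuinely more elementary than anything in the paper. But the statement you must prove is item (1) for \emph{all} $m\ge 3$, and you yourself identify correctly that for $m\ge 4$ the pairwise bound $4\sum_{r<s}x_rx_s$ exceeds $\tfrac43(\sum_r x_r)^2$ (already at $x_1=\dots=x_m$), so your chain of inequalities collapses. What you offer in its place---the Lagrange system $2\sum_s[[B_r,B_s],B_s]=\mu B_r$ on the sphere $\sum\|B_r\|^2=1$---is only a plan, not a proof: you never show that solutions with $\mu>\tfrac83$ do not exist, nor do you localize the maximiser to a single $2\times2$ block. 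This is not a fixable gap in an otherwise complete argument; it \emph{is} the theorem. Everything you have proved would also follow trivially from case (1) by padding with zero matrices, so the substance of the claim is untouched.

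For comparison, the paper does not run a variational argument on the matrices $B_r$ at all. It uses the multiplicative map $\varphi$ and the spectral decomposition $BB^t=Q\,\mathrm{diag}(x_1,\dots,x_N)\,Q^t$ to rewrite both sides in terms of the orthonormal basis $\{\check Q_\alpha\}$ of $HM(n)$ determined by $Q\in SO(N)$, $N=n^2$, reducing the whole theorem to the nonpositivity of the quadratic form $f_Q(x)=\sum_{\alpha,\beta}x_\alpha x_\beta\|[\check Q_\alpha,\check Q_\beta]\|^2-\tfrac43(\sum_\alpha x_\alpha)^2$ on $\mathbb R^N_+$, uniformly in $Q$. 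Crucially this eliminates $m$ from the problem: every $m$ is handled at once. Nonpositivity is then established by a continuity (open-closed) argument in $Q\in SO(N)$, whose closedness step is a Lagrange-multiplier argument in the \emph{scalar} variable $x$ on the simplex, fed by the structural estimates on $\|[\check Q_\alpha,\check Q_\beta]\|^2$ in Lemmas \ref{lem1}--\ref{lem4}. Those lemmas, not B\"ottcher--Wenzel, are where the constant $\tfrac43$ actually comes from, and they are precisely what your proposal lacks. If you want to push your variational plan through, the closer model is Lu's proof of the real symmetric case in \cite{Lu11} rather than Ge--Tang \cite{GT08}; either way, be prepared to supply an analogue of the commutator-norm estimates, since pairwise B\"ottcher--Wenzel alone cannot see the collective cancellation that occurs when four or more matrices are present.
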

Clearly, $H_3$ is a natural candidate for extending the real-symmetric matrices spanned by $A_1=H_1$ and $A_2=H_2$ to the Hermitian class. The second inequality of Theorem \ref{DDVVHermthm} is implied by the B\"{o}ttcher-Wenzel inequality
and its proof can be even traced back to Chern-do Carmo-Kobayashi \cite{CDK} for real symmetric matrices; hence we omit its proof.
The equality condition follows simply by restricting the matrices to be Hermitian (cf. \cite{BW08}).
It seems that the first inequality of Theorem \ref{DDVVHermthm} would also hold for arbitrary complex matrices, at least for real matrices.
The reason is that any matrix can be written as the sum of Hermitian (symmetric) and skew-Hermitian (skew-symmetric) matrices,
which enables one to combine their orthonormal bases and use this combined basis to compute the commutators as in (\ref{comm-bases}, \ref{comm-bases1}, \ref{comm-bases2}).
As for possible geometric applications of this DDVV-type inequality for Hermitian matrices, we believe that it would also derive a Simons-type inequality in K\"{a}hler (complex) geometry with certain ``Hermitian" tensors instead of the usual second fundamental tensor used in submanifold geometry known from the real case (cf. \cite{Lu11}).

The proof follows the method of Ge and Tang \cite{GT08}, \cite{Ge14} and we refer to \cite{GT11} for a sketch. Here we introduce the key ideas simply to follow this method.
First of all, one needs to ``find" (or guess) the best constant $c$ by testing examples of matrices in the regarded class. The number of the extreme matrices should not be too big (e.g. $2$, $3$ or $4$, etc.) and certain symmetries would exist as in the known DDVV-type inequalities. After the DDVV-type inequality (\ref{DDVVtypeineq}) has been transformed into  the non-positivity of a quadratic form $f_Q(x)$ ($x\in\mathbb{R}^N_+$, $Q\in SO(N)$) as (\ref{quadratic-form}) which is expected to be negative in the interior of $\mathbb{R}^N_+$ for any $Q\in SO(N)$, one has to verify three conditions (see (a) to (c) on page 11) in the approximation procedure of the proof. The condition (b) is automatically satisfied, while (a) and (c) (in particular, the inequality (\ref{contrad})) require one to prepare the formulae (\ref{eqno2}, \ref{eqno3}) and the preparatory lemmas (e.g. Lemmas \ref{lem3}, \ref{lem4}). The calculations of these preliminary results are similar to the original proof for symmetric matrices but more involved, as the regarded matrix class varies. One would succeed if the steps can be done, otherwise, one needs to try another $c$ by testing more examples. In fact, one could deduce some clues for finding the best constant $c$ from the estimates in the Lemmas \ref{lem3} and \ref{lem2} where $\frac 43$ would have to be replaced by $c$.

The paper is organized as follows: in Section \ref{sec2} we give all preliminary results mentioned above; in Section \ref{sec3} we firstly transfer the inequality to the non-positivity of the quadratic form $f_Q(x)$ and then show (a-c) with the help of the preliminary results from Section \ref{sec2}.
For those who have seen the method before, Section \ref{sec2} is the interesting part. If you are new to the method, it could be better to skip the technical lemmas and to read Section \ref{sec3} first with Section \ref{sec2} as a reference.
\section{Notations and preparatory lemmas}\label{sec2}
This section contains the necessary adaptions of the core lemmas from the DDVV proof given in \cite{GT08} to the new constant we aim at. We denote the space of $m\times n$ real matrices by $M(m,n)$, the space of $n\times n$ real matrices by $M(n)$, and the space of $n\times n$ Hermitian matrices by $HM(n)$, which has dimension $N:=n^2$. All spaces are regarded as real vector spaces.

For every $(i,j)$ with $1\leq i,j\leq n$, let
$$
\check{E}_{ij}:=\
\begin{cases}
E_{ii}                                 &\text{if}\ i=j,\\
(E_{ij}+E_{ji})/\sqrt{2}               &\text{if}\ i<j,\\
\mathbf{i}(E_{ij}-E_{ji})/\sqrt{2}      &\text{if}\ i>j,
\end{cases}
$$
where $E_{ij}\in M(n)$ is the matrix with 1 in position $(i,j)$ and 0 elsewhere. Clearly $\{\check{E}_{ij}\}$ is an orthonormal basis of $HM(n)$. The third component gives new basis elements that were absent in the real-symmetric proof of \cite{GT08}. The skew-symmetric case in \cite{Ge14} regarded the natural basis spanned by $(E_{ij}-E_{ji})/\sqrt{2}$ at this place. Let us put an order on the index set $S:=\{(i,j)\ |\ 1\leq i,j\leq n\}$ by
\begin{equation}\label{eqno1}
(i,j)<(k,l)\Longleftrightarrow i<k, \text{or} \ i=k\ \text{and} \ j<l.
\end{equation}
We use this order to index elements of $S$ with a single (Greek) index in the range $\{1,...,N\}$. Clearly, $(i,j)\mapsto \alpha=(i-1)n+j$ can be inverted.

For $\alpha \hat{=}(i,j)\leq \beta \hat{=}(k,l)$ in $S$, direct calculations imply
\begin{equation}\label{eqno2}
\left\|\[\check{E}_\alpha,\check{E}_\beta\]\right\|^2=
\begin{cases}
2         &\text{if}\ i=l<j=k,\\
\\
1         &\text{if}\ i=j=k<l\ |\ i<j=k=l\ |\ i=j=l<k\ |\ j<i=k=l,\\
\\
          &\ \ \ \ \ i<j=k<l\ |\ i=k<j<l\ |\ i<k<j=l\ |\ j=l<i<k\ |\\
\frac12   &\text{if}\ j<l<i=k\ |\ j<i=l<k\ |\ i<j=l<k\ |\ l<i<j=k\ |\\
          &\ \ \ \ \ i<l<j=k\ |\ i=l<j<k\ |\ i=l<k<j\ |\ j<k=i<l,\\
\\
0         &\text{otherwise},
\end{cases}
\end{equation}
while arbitrary $\alpha$ and $\beta$ may be tackled by symmetry. Note that the value $2$ comes from two matching index pairs, and $1$ is obtained when three indices are equal. Beside the value $\frac{1}{2}$, all non-vanishing configurations are given, in which two indices are equal. It is worth to mention that these are only possible to realize when there are at least three different numbers. Hence the third case will appear only for $m\geq3$, explaining why $m=2$ has a smaller constant.

Likewise, one checks for any $\alpha \hat{=}(i,j)$, $\beta \hat{=}(k,l)$ in $S$,
\begin{equation}\label{eqno3}
\sum_{\gamma\in S}\<[\check{E}_\alpha,\check{E}_\gamma],[\check{E}_\beta,\check{E}_\gamma]\>=2n\delta_{ik}\delta_{jl}-2\delta_{ij}\delta_{kl},
\end{equation}
where $\<A,B\>=\operatorname{Re}\[\operatorname{tr}\(AB^*\)\]$.

Let $\{\check{Q}_\alpha\}_{\alpha\in S}$ be any orthonormal basis of $HM(n)$. Then there exists a unique
orthogonal matrix $Q\in O(N)$ such that
\begin{equation}\label{Qeq}
(\check{Q}_1,\cdots,\check{Q}_N)=(\check{E}_1,\cdots,\check{E}_N)Q,
 \end{equation}
 i.e., $\check{Q}_\alpha=\sum_\beta q_{\beta\alpha}\check{E}_\beta$ for $Q=(q_{\alpha\beta})_{N\times N}$, and if $\check{Q}_\alpha=(\check{q}_{ij}^\alpha)_{n\times n}$, $\gamma\hat{=}(i,j), \tau\hat{=}(j,i)$,
\begin{equation}\label{qalpha}
\check{q}_{ij}^\alpha=\overline{\check{q}_{ji}^\alpha}=
\begin{cases}
q_{\gamma\alpha}                                               &\text{if}\ i=j,\\
(q_{\gamma\alpha}-q_{\tau\alpha}\mathbf{i})/\sqrt{2}          &\text{if}\ i<j.%\\
%(q_{\gamma\alpha}+q_{\beta\alpha}\mathbf{i})/\sqrt{2}          &if\ i>j.
\end{cases}
\end{equation}
This change of basis utilizes again the interpretation of matrix operations on a row of non-scalar entries that has been seen before in the definition of the $K$ action.

Let $\lambda_1, \cdots, \lambda_n$ be $n$ real numbers satisfying $\sum_{i=1}^n\lambda_i^2=1$ and $\lambda_1\geq\cdots\geq\lambda_n$. Define
$I_1:=\{j\ |\ \lambda_1-\lambda_j>\frac2{\sqrt{3}}\}, I_2:=\{i\ |\ \lambda_i-\lambda_n>\frac2{\sqrt{3}}\}$, and $$I:=\left\{(i,j)\in S\ |\ \lambda_i-\lambda_j>\frac2{\sqrt{3}}\right\}.$$
Let $n_0$ be the number of elements of $I$. Then $({1}\times I_1)\bigcup(I_2\times {n})\subset I\subset S$. Now comes the first technical lemma. It looks pretty much like the variant from \cite{GT08} and \cite{Ge14}, but the sets $I_1$, $I_2$ and $I$ differ in the lower bound imposed to the difference.

\begin{lem}\label{lem1}
Either $I=\{1\}\times I_1$ or $I=I_2\times\{n\}$.
\end{lem}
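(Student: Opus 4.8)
The plan is to argue by contradiction: suppose $I$ contains an element outside of $\{1\}\times I_1$ and an element outside of $I_2\times\{n\}$. Concretely, if $I\neq\{1\}\times I_1$ then there is some $(i,j)\in I$ with $i>1$, and if $I\neq I_2\times\{n\}$ then there is some $(k,l)\in I$ with $l<n$. The defining condition on $I$ gives $\lambda_i-\lambda_j>\frac{2}{\sqrt 3}$ and $\lambda_k-\lambda_l>\frac{2}{\sqrt 3}$. The goal is to combine these two inequalities, together with the normalization $\sum_{t=1}^n\lambda_t^2=1$ and the ordering $\lambda_1\geq\cdots\geq\lambda_n$, to reach a numerical impossibility — essentially because having two ``long gaps'' in the spectrum, neither of which is the extreme gap $\lambda_1-\lambda_n$, forces the $\lambda$'s to be too spread out to have unit norm.

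The key steps, in order, are as follows. First I would reduce to the worst case: since $i>1$ we have $\lambda_1\geq\lambda_i$, and since $l<n$ we have $\lambda_l\geq\lambda_n$; using monotonicity I can also assume the two gaps are ``disjoint or nested'' in a controlled way and, after possibly relabeling, extract from the data four indices (or fewer, with coincidences handled separately) capturing values $a\geq b> c\geq d$ among the $\lambda_t$ with $a-b$ or $b-c$ or $c-d$ absorbing the two gap inequalities — the cleanest route is to observe that the hypotheses produce two indices $p<q$ and $r<s$ with $\{p,q\}\neq\{1,?\}$ type obstructions, so that the value $\lambda_1$ strictly exceeds $\lambda_j$ by more than $\frac{2}{\sqrt3}$ is \emph{not} the only large gap. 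Second, I would split $\{1,\dots,n\}$ according to where these indices sit and bound $\sum\lambda_t^2$ from below: the standard trick (as in \cite{GT08} and \cite{Ge14}) is that if $\lambda_p-\lambda_q>\frac{2}{\sqrt3}$, then writing $\mu=\frac{\lambda_p+\lambda_q}{2}$ one has $\lambda_p^2+\lambda_q^2=2\mu^2+\frac12(\lambda_p-\lambda_q)^2>2\mu^2+\frac23$; doing this for the two gaps (which, because they are not the single extreme gap, involve an index in the ``middle'' that gets counted) forces $\sum_t\lambda_t^2>1$, contradicting the normalization. The constant $\frac{2}{\sqrt3}$ is exactly tuned so that $\frac23+\frac23=\frac43$ beats the slack available, which is why the threshold here differs from the real-symmetric and skew-symmetric cases.

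The main obstacle I expect is the bookkeeping of overlapping index configurations: the two witnessing pairs $(i,j)$ and $(k,l)$ from $I$ may share indices (e.g. $j=k$, or $i=k$, or $j=l$), and each coincidence pattern gives a slightly different lower bound for $\sum\lambda_t^2$; one must check that \emph{every} pattern still exceeds $1$, with the tightest case being the one that pins down why the lemma is sharp. A secondary subtlety is the very first reduction — translating ``$I\neq\{1\}\times I_1$'' into a usable statement about the existence of a pair with $i\geq 2$ — and symmetrically for $I_2\times\{n\}$; here one uses that $(\{1\}\times I_1)\cup(I_2\times\{n\})\subset I$ (already noted before the lemma) so that the two exceptional pairs are genuinely ``new'' elements of $I$ not accounted for by either canonical piece. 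Once the casework is organized, each individual estimate is a short computation with $\frac{2}{\sqrt3}$ and the quadratic identity above, so I would present one representative case in detail and indicate that the remaining ones are analogous.
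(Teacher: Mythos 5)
Your plan is essentially the paper's argument: the same quadratic bound $\lambda_p^2+\lambda_q^2\geq\tfrac12(\lambda_p-\lambda_q)^2$, the same tuning of the threshold $\tfrac{2}{\sqrt3}$ so that two disjoint "big gaps" force $\sum\lambda_t^2>\tfrac43>1$, and the same casework on shared indices to reach a contradiction. The one organizational difference is that the paper pivots on the guaranteed element $(1,n)\in I$ and shows any second element must lie in one of the two arms, which trims the overlap bookkeeping you anticipate; your choice of two generic witnesses $(i,j)$ with $i>1$ and $(k,l)$ with $l<n$ leads to the same estimates after a short reduction.
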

\begin{proof}
If $n_0=0$, the three sets are all empty. If $n_0=1$, the single element must be $(1,n)\in S$, and the three sets are equal. If $n_0\geq2$, let $(1,n)$ and $(i_1,j_1)$ be two different elements of $I$, that is
$\lambda_1-\lambda_n\geq\lambda_{i_1}-\lambda_{j_1}>\frac2{\sqrt{3}}$ and $(1,n)\neq(i_1,j_1)$. We assert that either  $i_1=1$ and $j_1\neq n$ or $i_1\neq1$ and $j_1=n$, which shows exactly that $I=\{1\}\times I_1\cup I_2\times\{n\}$. Otherwise, $1$,$i_1$,$j_1$ and $n$ would be four different elements in $\{1,\cdots,n\}$, and thus
$$
1\geq\lambda^2_1+\lambda^2_{i_1}+\lambda^2_{j_1}+\lambda^2_n\geq\frac{1}2(\lambda_1-\lambda_n)^2+\frac{1}2(\lambda_{i_1}-\lambda_{j_1})^2>\frac{4}{3}
$$
is a contradiction. Next, without loss of generality, we assume  $(i_1,j_1)\in\{1\}\times I_1$. Then it will be seen that $I_2\times\{n\}=\{(1,n)\}$, and thus $I=\{1\}\times I_1$, which completes the proof. Otherwise, if there is another element, say $(i_2,n)$, in $I_2\times\{n\}$, then $i_2\neq j_1$ since otherwise we would get the following contradiction
$$2\geq 2(\lambda_1^2+\lambda_n^2)\geq (\lambda_1-\lambda_n)^2=(\lambda_1-\lambda_{j_1}+\lambda_{i_2}-\lambda_n)^2>\frac{16}{3}.$$
Hence $i_1=1$, $j_1$, $i_2$ and $n$ are four different elements in $\{1,\cdots,n\}$, and we come to the same contradiction as above.
\end{proof}

The following result is a straightforward consequence of the previous lemma. In \cite{GT08} and \cite{Ge14}, similar inequalities with respect to the differing $c$ were shown.
\begin{lem}\label{lem2}
We have $\sum_{(i,j)\in I}[(\lambda_i-\lambda_j)^2-\frac4{3}]\leq\frac{2}3,$ where the equality holds %in the case when $I=\{1\}\times I_1$
if and only if $n_0=1$ and $\lambda_1=-\lambda_n=\frac{\sqrt{2}}2,\lambda_2=\cdots\lambda_{n-1}=0$.
\end{lem}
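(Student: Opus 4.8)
The plan is to reduce everything to the ``star'' shape of $I$ supplied by Lemma \ref{lem1} and then to run a short Cauchy--Schwarz estimate. First I would invoke Lemma \ref{lem1}: either $I=\{1\}\times I_1$ or $I=I_2\times\{n\}$. In the second case I would apply the order-reversing substitution $\lambda_i\mapsto-\lambda_{n+1-i}$, which keeps both $\sum_i\lambda_i^2=1$ and the monotonicity $\lambda_1\geq\cdots\geq\lambda_n$, carries $I_2\times\{n\}$ to a set of the form $\{1\}\times I_1'$, and leaves both the quantity $\sum_{(i,j)\in I}[(\lambda_i-\lambda_j)^2-\tfrac43]$ and the claimed extremal configuration invariant. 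Hence it suffices to treat $I=\{1\}\times I_1$, for which the sum in question equals $\sum_{j\in I_1}[(\lambda_1-\lambda_j)^2-\tfrac43]$.

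Set $k:=|I_1|$. If $k=0$ the sum is $0<\tfrac23$, so I may assume $k\geq1$. The key estimate I would establish is
$$\sum_{j\in I_1}(\lambda_1-\lambda_j)^2\ \leq\ (k+1)\Big(\lambda_1^2+\sum_{j\in I_1}\lambda_j^2\Big)\ \leq\ k+1,$$
where the last step uses that $\{1\}\cup I_1$ consists of $k+1$ distinct indices in $\{1,\dots,n\}$ while $\sum_{i=1}^n\lambda_i^2=1$. The first inequality is elementary: after expanding, its right-hand side minus its left-hand side equals $\big(\lambda_1+\sum_{j\in I_1}\lambda_j\big)^2+\big(k\sum_{j\in I_1}\lambda_j^2-(\sum_{j\in I_1}\lambda_j)^2\big)$, and the second bracket is $\geq0$ by the Cauchy--Schwarz inequality for the $k$ numbers $\lambda_j$. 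Therefore
$$\sum_{j\in I_1}\big[(\lambda_1-\lambda_j)^2-\tfrac43\big]\ \leq\ (k+1)-\tfrac{4k}{3}\ =\ 1-\tfrac{k}{3}\ \leq\ \tfrac23,$$
which gives the inequality.

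For the equality discussion I would read the three inequalities backwards. Equality forces $1-\tfrac k3=\tfrac23$, hence $k=1$, say $I_1=\{j_1\}$; it forces $\lambda_1^2+\lambda_{j_1}^2=1$, hence $\lambda_i=0$ for $i\notin\{1,j_1\}$; and it forces $(\lambda_1+\lambda_{j_1})^2=0$, hence $\lambda_{j_1}=-\lambda_1$. These give $\lambda_1=-\lambda_{j_1}=\tfrac{\sqrt2}{2}$ with the remaining $\lambda_i$ zero; since $\lambda_{j_1}$ is then the unique minimal entry, $j_1=n$, so $\lambda_1=-\lambda_n=\tfrac{\sqrt2}{2}$ and $\lambda_2=\cdots=\lambda_{n-1}=0$. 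Conversely, I would check directly that this tuple has $n_0=1$ (only the pair $(1,n)$ satisfies $\lambda_i-\lambda_j>\tfrac2{\sqrt3}$, since $\sqrt2>\tfrac2{\sqrt3}>\tfrac{\sqrt2}{2}$) and attains the value $\tfrac23$.

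I do not anticipate a genuine difficulty, since Lemma \ref{lem1} already constrains $I$ severely and the remaining estimate is a two-line application of Cauchy--Schwarz. The points that need care are verifying that the symmetrization $\lambda_i\mapsto-\lambda_{n+1-i}$ respects every hypothesis as well as the equality datum (so that the case $I=I_2\times\{n\}$ genuinely reduces to $I=\{1\}\times I_1$), and keeping the chain of equality conditions tight while extracting the extremal $\lambda$'s.
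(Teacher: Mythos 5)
Your proof is correct and follows essentially the same route as the paper. Like the paper, you reduce to the case $I=\{1\}\times I_1$ via Lemma \ref{lem1} (you merely make the ``WLOG'' explicit with the symmetry $\lambda_i\mapsto-\lambda_{n+1-i}$, which is exactly the hidden reason for the paper's without-loss-of-generality claim), and then you run the identical chain: bound the cross term $-2\lambda_1\sum_{j\in I_1}\lambda_j$ by $\lambda_1^2+(\sum_{j\in I_1}\lambda_j)^2$, apply Cauchy--Schwarz to get $(\sum_{j\in I_1}\lambda_j)^2\leq n_0\sum_{j\in I_1}\lambda_j^2$, use $\lambda_1^2+\sum_{j\in I_1}\lambda_j^2\leq1$, and finish with $n_0\geq1$. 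The only cosmetic difference is that you package the first two steps into a single algebraic identity $(k+1)(\lambda_1^2+\sum\lambda_j^2)-\sum(\lambda_1-\lambda_j)^2=(\lambda_1+\sum\lambda_j)^2+(k\sum\lambda_j^2-(\sum\lambda_j)^2)$, whereas the paper presents them as two consecutive $\leq$'s. Your equality analysis also matches the paper's, reading the chain backwards, with the small extra observation that $j_1=n$ because $\lambda_{j_1}$ ends up being the unique minimum.
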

\begin{proof}
Without loss of generality,we can assume $I=\{1\}\times I_1$ by Lemma \ref{lem1}. Then
$$
\begin{aligned}
\sum_{(i,j)\in I}\left[(\lambda_i-\lambda_j)^2-\frac43\right]&=\sum_{j\in I_1}(\lambda^2_1+\lambda^2_j-2\lambda_1\lambda_j)-\frac43n_0\\
                                                  &=n_0\lambda^2_1+\sum_{j\in I_1}\lambda^2_j-2\lambda_1\sum_{j\in I_1}\lambda_j-\frac43n_0\\
                                                  &\leq(n_0+1)\lambda^2_1+\sum_{j\in I_1}\lambda^2_j+\Big(\sum_{j\in I_1}\lambda_j\Big)^2-\frac43n_0\\
                                                  &\leq(n_0+1)\Big(\lambda^2_1+\sum_{j\in I_1}\lambda^2_j\Big)-\frac43n_0\\
                                                  &\leq(n_0+1)\sum^n_{i=1}\lambda^2_i-\frac43n_0\\
                                                  &=1-\frac13n_0\leq \frac23,
\end{aligned}
$$
where the equality condition is easily seen from the proof.
\end{proof}

The next lemma is adapted slightly from the symmetric case in \cite{GT08} to the Hermitian case. The difference comes from the index set $S$ which now includes not only those indices $\gamma\hat{=}(i,j)$ but also those $\tau\hat{=}(j,i)$ (when $i<j$). This leads to a difference in the expression (\ref{qalpha}) where $q_{\gamma\alpha}$ and $q_{\tau\alpha}$ are two entries in a column of the matrix $Q\in O(N)$.
\begin{lem}\label{lem3}
For any $Q\in O(N)$, given any $\alpha\in S$ and any subset $J_\alpha\subset S$, we have $$\sum_{\beta\in{J}_\alpha}\(\left\|\[\check{Q}_\alpha,\check{Q}_\beta\]\right\|^2-\frac{4}3\)\leq\frac{4}3.$$
\end{lem}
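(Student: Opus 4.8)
The plan is to reduce to the case in which $\check Q_\alpha$ is diagonal, compute the commutators against a diagonal matrix explicitly, and then read off the bound from the row/column orthonormality of $Q$ combined with Lemma \ref{lem2}.

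First I would observe that $\check Q_\alpha\in HM(n)$ has $\|\check Q_\alpha\|^2=\sum_\beta q_{\beta\alpha}^2=1$, since $Q\in O(N)$. Conjugating the whole basis $\{\check Q_\beta\}_{\beta\in S}$ by a unitary $P$ — which is an isometry of $HM(n)$ for $\langle A,B\rangle=\operatorname{Re}[\operatorname{tr}(AB^*)]$ and preserves every commutator norm, as $\|[P^*\check Q_\alpha P,P^*\check Q_\beta P]\|=\|P^*[\check Q_\alpha,\check Q_\beta]P\|=\|[\check Q_\alpha,\check Q_\beta]\|$ — I may assume $\check Q_\alpha=\operatorname{diag}(\lambda_1,\dots,\lambda_n)$ with $\lambda_1\geq\cdots\geq\lambda_n$ and $\sum_i\lambda_i^2=1$; the conjugated basis is again of the form $\check Q_\beta=\sum_{\gamma\in S}q_{\gamma\beta}\check E_\gamma$ for a new $Q\in O(N)$. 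A direct computation then gives $[\check Q_\alpha,\check E_{ii}]=0$ and, for $\gamma\hat{=}(i,j)$ with $i\neq j$, that $[\check Q_\alpha,\check E_\gamma]$ is a scalar multiple of $E_{ij}-E_{ji}$ (when $i<j$) or of $\mathbf{i}(E_{ij}+E_{ji})$ (when $i>j$), in either case with $\|[\check Q_\alpha,\check E_\gamma]\|^2=(\lambda_i-\lambda_j)^2$; moreover the nonzero matrices $\{[\check Q_\alpha,\check E_\gamma]\}_{\gamma\in S}$ are pairwise orthogonal (those attached to distinct unordered pairs $\{i,j\}$ occupy disjoint off-diagonal positions, while $[\check Q_\alpha,\check E_{ij}]$ is real and $[\check Q_\alpha,\check E_{ji}]$ purely imaginary on the same positions). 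Hence
$$\|[\check Q_\alpha,\check Q_\beta]\|^2=\sum_{(i,j)\in S}(\lambda_i-\lambda_j)^2\,q_{(i,j)\beta}^2 .$$

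Next I would sum over $\beta\in J_\alpha$ and set $v_{ij}:=\sum_{\beta\in J_\alpha}q_{(i,j)\beta}^2$. Since each row of $Q$ is a unit vector, $v_{ij}\leq 1$; since each column of $Q$ is a unit vector, $\sum_{(i,j)\in S}v_{ij}=|J_\alpha|$, so in particular $\sum_{i\neq j}v_{ij}\leq|J_\alpha|$. Therefore
$$\sum_{\beta\in J_\alpha}\Big(\|[\check Q_\alpha,\check Q_\beta]\|^2-\frac43\Big)=\sum_{i\neq j}(\lambda_i-\lambda_j)^2 v_{ij}-\frac43|J_\alpha|\leq\sum_{i\neq j}\Big[(\lambda_i-\lambda_j)^2-\frac43\Big]v_{ij},$$
and discarding the nonpositive terms (those with $(\lambda_i-\lambda_j)^2\leq\frac43$) and using $v_{ij}\leq 1$ on the rest bounds the right-hand side by $\sum_{i\neq j,\,(\lambda_i-\lambda_j)^2>4/3}[(\lambda_i-\lambda_j)^2-\frac43]$. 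Because $\lambda_1\geq\cdots\geq\lambda_n$, the set of ordered pairs $(i,j)$ with $i\neq j$ and $(\lambda_i-\lambda_j)^2>\frac43$ is exactly $I$ together with the reversed pairs of $I$, so this last sum equals $2\sum_{(i,j)\in I}[(\lambda_i-\lambda_j)^2-\frac43]\leq\frac43$ by Lemma \ref{lem2}, which is the assertion.

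I expect the only delicate point to be bookkeeping rather than a genuine obstacle: for $i<j$ both basis elements $\check E_{ij}$ and $\check E_{ji}$ sit over the unordered pair $\{i,j\}$ and each contributes the gap $(\lambda_i-\lambda_j)^2$. This doubling — absent in the real-symmetric argument of \cite{GT08}, where only $(E_{ij}+E_{ji})/\sqrt2$ is present — is precisely what turns the bound $\frac23$ of Lemma \ref{lem2} into the bound $\frac43$ of the present lemma, mirroring the jump of the optimal constant from $1$ to $\frac43$ in Theorem \ref{DDVVHermthm}(1).
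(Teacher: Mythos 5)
Your argument is correct and follows essentially the same route as the paper: reduce by unitary invariance to $\check Q_\alpha$ diagonal, express $\|[\check Q_\alpha,\check Q_\beta]\|^2=\sum_{(i,j)\in S}(\lambda_i-\lambda_j)^2 q_{(i,j)\beta}^2$, then combine the row/column orthonormality of $Q$ with Lemma~\ref{lem2}. The only cosmetic differences are that you obtain the key display via the pairwise orthogonality of the matrices $[\check Q_\alpha,\check E_\gamma]$ rather than via the entry formula~(\ref{qalpha}), and you package the orthogonality bounds through the auxiliary quantities $v_{ij}$; the paper keeps the sums $\sum_\beta\frac12(q_{\gamma\beta}^2+q_{\tau\beta}^2)$ explicit, but the bounding steps are the same.
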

\begin{proof}
Since the required inequality is invariant under unitary congruences, i.e.,
$$\sum_{\beta\in{J}_\alpha}\left\|\[\check{Q}_\alpha,\check{Q}_\beta\]\right\|^2=\sum_{\beta\in{J}_\alpha}\left\|\[P^*\check{Q}_\alpha P,P^*\check{Q}_\beta P\]\right\|^2,\quad \text{for any~~ } P\in U(n),$$
we can assume without loss of generality $\check{Q}_\alpha=diag\(\lambda_1,\cdots,\lambda_n\)$ (Notice also that $(P^*\check{Q}_1P,\cdots,P^*\check{Q}_NP)$ is also an orthonormal basis of $HM(n)$ and can be expressed as (\ref{Qeq}) by a $Q\in O(N)$), with $\sum_i\lambda^2_i=1$, $\lambda_1\geq\cdots\geq\lambda_n$ and $I=\{1\}\times I_1$ due to Lemma \ref{lem1}. Then by (\ref{qalpha}) and Lemma \ref{lem2},
\begin{equation}\label{ineq-lem3}
\begin{aligned}
\sum_{\beta\in J_\alpha}\left(\left\|\[\check{Q}_\alpha,\check{Q}_\beta\]\right\|^2-\frac43\right)
&=\sum_{\beta\in J_\alpha}\(\sum_{i,j=1}^n\(\lambda_i-\lambda_j\)^2|\check{q}^\beta_{ij}|^2-\frac43\cdot1\)\\
&=\sum_{\beta\in J_\alpha}\sum_{(i,j)\hat{=}\gamma\in S}\(\(\lambda_i-\lambda_j\)^2-\frac43\)\cdot|\check{q}^\beta_{ij}|^2\\
&=\sum_{\beta\in J_\alpha}\sum_{(i,j)\hat{=}\gamma\in S}\(\(\lambda_i-\lambda_j\)^2-\frac43\)\cdot\frac12\(q^2_{\gamma\beta}+q^2_{\tau\beta}\)\\
&\leq2\sum_{(i,j)\hat{=}\gamma, i<j}\(\(\lambda_i-\lambda_j\)^2-\frac43\)\cdot\sum_{\beta\in J_\alpha}\frac12\(q^2_{\gamma\beta}+q^2_{\tau\beta}\)\\
&\leq2\sum_{(i,j)\hat{=}\gamma\in I}\(\(\lambda_i-\lambda_j\)^2-\frac43\)\cdot\sum_{\beta\in S}\frac12\(q^2_{\gamma\beta}+q^2_{\tau\beta}\)\\
&=2\sum_{(i,j)\hat{=}\gamma\in I}\(\(\lambda_i-\lambda_j\)^2-\frac43\)\cdot 1\leq\frac{4}3,
\end{aligned}
\end{equation}
where $\tau\hat{=}(j,i)$, the equality in the last line is because of $Q\in O(N)$ and negative summands coming from $(i,j)\not\in I$ were omitted.
\end{proof}

The last technical lemma bounds the sum of the commutator norms over the basis matrices. For the more restricted classes considered in \cite{GT08} and \cite{Ge14}, it was at most half as big.
\begin{lem}\label{lem4}
We have $\sum_{\beta\in S}\left\|\[\check{Q}_\alpha,\check{Q}_\beta\]\right\|^2\leq2n$ for any $ Q\in O(N)$ and any $\alpha\in S$.
\end{lem}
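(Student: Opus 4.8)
The plan is to expand both commutators in the fixed orthonormal basis $\{\check E_\gamma\}_{\gamma\in S}$ of $HM(n)$ and then feed the result into the ``Casimir''-type identity (\ref{eqno3}). Writing $\check Q_\alpha=\sum_{\gamma}q_{\gamma\alpha}\check E_\gamma$ and $\check Q_\beta=\sum_{\delta}q_{\delta\beta}\check E_\delta$ with the real coefficients supplied by (\ref{Qeq}), and using that the commutator and $\<\cdot,\cdot\>$ are $\mathbb R$-bilinear, we obtain
$$\sum_{\beta\in S}\big\|[\check Q_\alpha,\check Q_\beta]\big\|^2=\sum_{\gamma,\gamma',\delta,\delta'\in S}q_{\gamma\alpha}q_{\gamma'\alpha}\Big(\sum_{\beta\in S}q_{\delta\beta}q_{\delta'\beta}\Big)\<[\check E_\gamma,\check E_\delta],[\check E_{\gamma'},\check E_{\delta'}]\>.$$
The first step is the $\beta$-summation: since the rows of $Q\in O(N)$ are orthonormal, $\sum_{\beta\in S}q_{\delta\beta}q_{\delta'\beta}=\delta_{\delta\delta'}$, so the two inner commutators collapse onto a single index $\delta$. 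Then (\ref{eqno3}) applies, with the triple $(\alpha,\beta,\gamma)$ there playing the role of $(\gamma,\gamma',\delta)$: for $\gamma\hat=(i,j)$ and $\gamma'\hat=(k,l)$,
$$\sum_{\delta\in S}\<[\check E_\gamma,\check E_\delta],[\check E_{\gamma'},\check E_\delta]\>=2n\delta_{ik}\delta_{jl}-2\delta_{ij}\delta_{kl}.$$

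Substituting this back, the contribution of $2n\delta_{ik}\delta_{jl}$ forces $\gamma=\gamma'$ and equals $2n\sum_{\gamma\in S}q_{\gamma\alpha}^2=2n$, since the columns of $Q$ are orthonormal (equivalently $\|\check Q_\alpha\|=1$). For the contribution of $-2\delta_{ij}\delta_{kl}$ only the diagonal indices survive, and by (\ref{qalpha}) one has $q_{\gamma\alpha}=\check q^\alpha_{ii}$ whenever $\gamma\hat=(i,i)$; hence that contribution is $-2\big(\sum_{i=1}^n\check q^\alpha_{ii}\big)^2=-2(\operatorname{tr}\check Q_\alpha)^2$, which is $\le0$ because $\check Q_\alpha$ is Hermitian and so has real trace. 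Altogether
$$\sum_{\beta\in S}\big\|[\check Q_\alpha,\check Q_\beta]\big\|^2=2n-2(\operatorname{tr}\check Q_\alpha)^2\le 2n,$$
with equality exactly when $\operatorname{tr}\check Q_\alpha=0$.

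An equivalent, more hands-on route parallels the proof of Lemma \ref{lem3}: the left-hand side is invariant under unitary congruence, so one may assume $\check Q_\alpha=diag(\lambda_1,\dots,\lambda_n)$ with $\sum_i\lambda_i^2=1$; then $\|[\check Q_\alpha,\check Q_\beta]\|^2=\sum_{i,j}(\lambda_i-\lambda_j)^2|\check q^\beta_{ij}|^2$ as in (\ref{ineq-lem3}), and summing over $\beta$ via (\ref{qalpha}) together with $\sum_{\beta\in S}q_{\gamma\beta}^2=1$ gives $\sum_{\beta}\|[\check Q_\alpha,\check Q_\beta]\|^2=\sum_{i,j}(\lambda_i-\lambda_j)^2=2n-2\big(\sum_i\lambda_i\big)^2\le 2n$. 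I do not expect a genuine obstacle here: this is the lightest of the four preparatory lemmas, amounting to (\ref{eqno3}) packaged with the normalization $\|\check Q_\alpha\|=1$. The only points that demand care are the index bookkeeping --- isolating the vanishing diagonal terms, pairing $(i,j)$ with $(j,i)$ correctly when invoking (\ref{qalpha}), and calling on row- versus column-orthogonality of $Q$ at the right moments.
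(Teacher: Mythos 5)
Your first route is exactly the paper's proof: expand $\check Q_\alpha,\check Q_\beta$ in the $\check E$-basis, use row-orthogonality of $Q$ to collapse the $\beta$-sum, invoke (\ref{eqno3}), and end with $2n\sum_\gamma q_{\gamma\alpha}^2-2\big(\sum_i\check q^\alpha_{ii}\big)^2=2n-2(\operatorname{tr}\check Q_\alpha)^2\le 2n$. Correct, same approach; the alternative diagonalization route you sketch is also valid but unnecessary.
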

\begin{proof}
It follows from (\ref{eqno3}, \ref{Qeq}, \ref{qalpha}) that
$$
\begin{aligned}
\sum_{\beta\in S}\left\|\[\check{Q}_\alpha,\check{Q}_\beta\]\right\|^2
&=\sum_{\beta\gamma\tau\xi\eta}q_{\gamma\alpha}q_{\xi\alpha}q_{\tau\beta}q_{\eta\beta}\<[\check{E}_\gamma,\check{E}_\tau],[\check{E}_\xi,\check{E}_\eta]\>\\
&=\sum_{\gamma\xi}q_{\gamma\alpha}q_{\xi\alpha}\sum_\tau\<[\check{E}_\gamma,\check{E}_\tau],[\check{E}_\xi,\check{E}_\tau]\>\\
&=\sum_{\gamma\hat{=}(i,j),\xi\hat{=}(k,l)}q_{\gamma\alpha}q_{\xi\alpha}\cdot(2n\delta_{ik}\delta_{jl}-2\delta_{ij}\delta_{kl})\\
&=2n\sum_\gamma q^2_{\gamma\alpha}-2\Big(\sum_i\check{q}^\alpha_{ii}\Big)^2\\
&\leq 2n.
\end{aligned}
$$
\end{proof}

%
%We will also need a result of linear algebra for proving the equality case.
%\begin{lem}\label{lem6}
%Let $A$, $B$ be two matrices in $M(N,m)$. Then $AA^t=BB^t$ if and only if $A=BR$ for some $R\in O(m)$.
%\end{lem}

 \section{Proof of the main results}\label{sec3}
 Now, we are going to detail the method introduced in \cite{GT08}, with only slight modifications to use the critical lemmas of the previous section. In the first step, the inequality is rewritten in order to take profit of the matrix structure. As in \cite{GT08} and \cite{Ge14}, we transform the DDVV-type inequality to the non-positivity of a quadratic form $f_Q(x)$ $(x\in\mathbb{R}^N_+)$ parameterized by $Q\in SO(N)$. In order to do this, we need to introduce a ``multiplicative" map $\varphi$ as in \cite{GT08} and \cite{Ge14}. Here we also provide the readers with an alternative interpretation of $\varphi$ for a better understanding.

 Let $\varphi:M(m,n)\longrightarrow M(\binom{m}{2},\binom{n}{2})$ be the map defined by $\varphi\(A\)_{\(i,j\)\(k,l\)}:=A\binom{kl}{ij}$, where $1\leq i<j\leq m$, $1\leq k<l\leq n$, and $A\binom{kl}{ij}=a_{ik}a_{jl}-a_{il}a_{jk}$ is the discriminant of the $2\times2$ submatrix of $A$ that is the intersection of rows $i$ and $j$ with columns $k$ and $l$, arranged with the same order as in (\ref{eqno1}). One can verify directly that $\varphi\(I_n\)=I_{\binom{n}{2}}$, $\varphi\(A\)^t=\varphi\(A^t\)$, and in particular, the map $\varphi$ preserves the matrix product, i.e., $\varphi(AB)=\varphi(A)\varphi(B)$ holds for $A\in M(m,k)$ and $B\in M(k,n)$. Alternatively, one can regard $\varphi$ as a ``homomorphism"\footnote{It is indeed a homomorphism when $m=n$.}  from $M(m,n)\cong \mathrm{Hom}(\mathbb{R}^n, \mathbb{R}^m)$ to $M(\binom{m}{2},\binom{n}{2})\cong \mathrm{Hom}(\Lambda^2(\mathbb{R}^n), \Lambda^2(\mathbb{R}^m))$, by extending the linear map $A\in\mathrm{Hom}(\mathbb{R}^n, \mathbb{R}^m)$ naturally to a linear map $\varphi(A)$ from the exterior algebra (of grade $2$) $\Lambda^2(\mathbb{R}^n)$ to $\Lambda^2(\mathbb{R}^m)$:
$$\varphi(A)(x\wedge y):=(Ax)\wedge (Ay), \quad \text{for any~~}x,y\in \mathbb{R}^n.$$
An elementary proof via matrix was included in the first author's Bachelor thesis which was incorporated to the joint paper \cite{AGPP08}. However, in \cite{AGPP08} it was totally rewritten via exterior algebra (see Section 3 there) and in this viewpoint, the properties of $\varphi$ are well-known.

 Let $B_1,\cdots,B_m$ be any $n\times n$ Hermitian matrices ($n\geq2$). Their coefficients in the standard basis $\{\check{E}_\alpha|\alpha\in S\}$ of $HM(n)$ are determined by a matrix $B\in M\(N,m\)$ as
 \begin{equation}\label{BiB}
 \(B_1,\cdots,B_m\)=\(\check{E}_1,\cdots,\check{E}_N\)B.
 \end{equation}
 Since $B$ is real and $BB^t$ is a $N\times N$ positive semi-definite matrix, there
exists an orthogonal matrix $Q\in SO(N)$ such that
\begin{equation}\label{BBdiag}
BB^t=Q~diag(x_1,\cdots,x_N)~ Q^t, \quad \text{where}~~x_{\alpha}\geq 0,~ 1\leq\alpha\leq N.
\end{equation}
Thus
\begin{equation}\label{BBnorm}
\sum_{r=1}^m\|B_r\|^2=\|B\|^2 =\sum_{\alpha=1}^Nx_{\alpha}.
\end{equation}
Moreover, this orthogonal matrix $Q$ determines an orthonormal basis $\{\check{Q}_{\alpha}|\alpha\in S\}$ of $HM(n)$ as (\ref{Qeq}) in Section \ref{sec2}.

 We use the lexicographic order as in (\ref{eqno1}) for the indices sets $\{\(r,s\)|1\leq r<s\leq m\}$ and $\{\(\alpha,\beta\)|1\leq\alpha<\beta\leq N\}$. Then we can arrange $\{[B_r,B_s]\}_{r<s}$ and $\{[\check{E}_\alpha,\check{E}_\beta]\}_{\alpha<\beta}$ into $\binom{m}{2}$- and $\binom{N}{2}$-vectors, respectively.

Now we observe that
\begin{equation}\label{comm-bases}
\Big([B_1, B_2],\cdots,[B_{m-1}, B_m]\Big)=\Big([\check{E}_1, \check{E}_2],\cdots,[\check{E}_{N-1}, \check{E}_N]\Big) \varphi(B).
\end{equation}

 Let $C(\check{E})$ denote the matrix in $M(\binom{N}{2})$
defined by
\begin{equation}\label{comm-bases1}
C(\check{E})_{(\alpha,\beta)(\gamma,\tau)}:=\langle~
[\check{E}_{\alpha}, \check{E}_{\beta}],~ [\check{E}_{\gamma},
\check{E}_{\tau}]~ \rangle,
\end{equation}
for $1\leq\alpha<\beta\leq N$,
$1\leq\gamma<\tau\leq N$. Moreover we will use the same notation for
$\{B_r\}$ and $\{\check{Q}_{\alpha}\}$, \emph{i.e.}, the $\binom{m}{2}\times\binom{m}{2}$ matrix $C(B):=\Big(\langle~
[B_{r_1}, B_{s_1}],~ [B_{r_2}, B_{s_2}]~ \rangle\Big)$ and the $\binom{N}{2}\times\binom{N}{2}$ matrix $C(Q):=\Big(\langle~
[\check{Q}_{\alpha}, \check{Q}_{\beta}],~ [\check{Q}_{\gamma},
\check{Q}_{\tau}]~ \rangle\Big)$
respectively. Then it is obvious from (\ref{comm-bases}) that
\begin{equation}\label{comm-bases2}
C(B)=\varphi(B^t)C(\check{E})\varphi(B), \hskip 0.3cm C(Q)=\varphi(Q^t)C(\check{E})\varphi(Q).
\end{equation}

By (\ref{comm-bases2}) and the multiplicativity of $\varphi$, we have
\begin{eqnarray}\label{commtransf}
\sum_{r,s=1}^m\|[B_r, B_s]\|^2&=&2\operatorname{tr}~C(B)=2\operatorname{tr}\hskip 0.1cm
\varphi(B^t)C(\check{E})\varphi(B)\\
&=&2\operatorname{tr}~\varphi(BB^t)C(\check{E})=2\operatorname{tr}~
\varphi(diag(x_1,\cdots,x_N))C(Q)\nonumber\\
&=&\sum_{\alpha,\beta=1}^Nx_{\alpha}x_{\beta}\|[\check{Q}_{\alpha},
\check{Q}_{\beta}]\|^2.\nonumber
\end{eqnarray}

Combining (\ref{BBdiag}, \ref{BBnorm}, \ref{commtransf}), the inequality (1) of Theorem \ref{DDVVHermthm} is now transformed into the following:
\begin{equation}\label{DDVVtransf}
\sum_{\alpha,\beta=1}^Nx_\alpha x_\beta\left\|\[\check{Q}_\alpha,\check{Q}_\beta\]\right\|^2\leq\frac43\(\sum_{\alpha=1}^Nx_\alpha\)^2,\quad \forall x\in R^N_+,~ \forall Q\in SO\(N\),
\end{equation}
where $R^N_+:=\{x=\(x_1,\cdots,x_N\)\in R^N \mid x_\alpha\geq0,1\leq\alpha\leq N\}$.

The next step is to show (\ref{DDVVtransf}). For this, define the function:
\begin{equation}\label{quadratic-form}
f_Q(x)=F(x,Q):=\sum_{\alpha,\beta=1}^Nx_{\alpha}x_{\beta}\|[\check{Q}_{\alpha},
\check{Q}_{\beta}]\|^2-\frac{4}{3}\Big(\sum_{\alpha=1}^Nx_{\alpha}\Big)^2.
\end{equation}
Then $F$ is a continuous function defined on $\mathbb{R}^N\times
SO(N)$ (equipped with the product topology of Euclidean and Frobenius metric spaces) and thus uniformly continuous on any compact subset of
$\mathbb{R}^N\times SO(N)$. Let
$\Delta:=\{x\in\mathbb{R}^N_{+}~|~\sum_{\alpha}x_{\alpha}=1\}$
and for any sufficiently small $\varepsilon>0$,
$\Delta_{\varepsilon}:=\{x\in
\Delta~|~x_{\alpha}\geq \varepsilon, 1\leq\alpha\leq N\}$.
Also let
$$G:=\{Q\in SO(N)~|~f_Q(x)\leq 0, \text{~for~all~} x\in
\Delta\},$$
$$G_{\varepsilon}:=\{Q\in SO(N)~|~f_Q(x)< 0,
\text{~for~all~} x\in \Delta_{\varepsilon}\}.$$ We claim that
$G=\lim_{\varepsilon\rightarrow 0}G_{\varepsilon}=SO(N).$ Note that
this implies (\ref{DDVVtransf}) by the homogeneity of $f_Q$ and thus proves Theorem \ref{DDVVHermthm}. In
fact we can show
\begin{equation}\label{G epsilon}
G_{\varepsilon}=SO(N) ~\quad \text{for any sufficiently small } \varepsilon>0.
\end{equation}
To prove (\ref{G epsilon}), we use the continuity method, in which
we must prove the following three properties and remember that there are only the two trivial sets that are open and closed at the same time:
\begin{itemize}
\item[\textbf{(a)}]\label{step1} $I_N\in G_{\varepsilon}$ (and thus
$G_{\varepsilon}\neq\emptyset$);
\item[\textbf{(b)}]\label{step2} $G_{\varepsilon}$ is open in $SO(N)$;
\item[\textbf{(c)}]\label{step3} $G_{\varepsilon}$ is closed in $SO(N)$.
\end{itemize}

\textbf{Proof of (a)}.  For any $ x\in\Delta_\epsilon$, applying (\ref{eqno2}) we get
$$\begin{aligned}
  f_{I_N}\(x\) = & \sum_{\alpha,\beta=1}^Nx_\alpha{x}_\beta\|\[\check{E}_\alpha,\check{E}_\beta\]\|^2-\frac43\\
   =& ~4\sum_{i<j}x_{ij}x_{ji}+2\sum_{i<j}(x_{ii}x_{ij}+x_{ij}x_{jj}+x_{ii}x_{ji}+x_{ji}x_{jj})+\\
    & ~\sum_{i<j<k}(x_{ij}x_{jk}+x_{ij}x_{ik}+x_{ik}x_{jk}+x_{ji}x_{ki}
       +x_{ki}x_{kj}+x_{ji}x_{kj}+x_{ij}x_{kj}+\\
    & ~x_{jk}x_{ki}+x_{ik}x_{kj}+x_{ij}x_{ki}+x_{ik}x_{ji}+x_{ji}x_{jk})-\frac43\Big(\sum^n_{i,j=1}x_{ij}\Big)^2\\
\leq&  \sum_{i<j}\Big((x_{ij}+x_{ji})^2+2(x_{ii}+x_{jj})(x_{ij}+x_{ji})\Big)+\\
    &  \sum_{i<j<k}\Big( (x_{ik}+x_{jk}+x_{ki}+x_{kj})(x_{ij}+x_{ji})+(x_{ik}+x_{ki})(x_{jk}+x_{kj}) \Big)-\frac43\Big(\sum^n_{i,j=1}x_{ij}\Big)^2\\
   =& \sum_{i<j}\Big[(x_{ij}+x_{ji})\Big(x_{ij}+x_{ji}+\sum_{k=j+1}^n(x_{ik}+x_{jk}+x_{ki}+x_{kj})+\sum_{k=1}^{i-1}(x_{jk}+x_{kj})\Big)\Big]+\\
    & \sum_{i<j}\Big(2(x_{ii}+x_{jj})(x_{ij}+x_{ji})\Big)-\frac43\Big(\sum^n_{i<j}(x_{ij}+x_{ji})+\sum_kx_{kk}\Big)^2\\
   <& \Big(\sum_{i<j}(x_{ij}+x_{ji})\Big)\Big(\sum_{k<l}(x_{kl}+x_{lk})\Big)+2\Big(\sum_kx_{kk}\Big)\Big(\sum_{i<j}(x_{ij}+x_{ji})\Big)-\\
    &  \frac43\Big[\Big(\sum^n_{i<j}(x_{ij}+x_{ji})\Big)^2+2\Big(\sum_kx_{kk}\Big)\Big(\sum_{i<j}(x_{ij}+x_{ji})\Big)+\Big(\sum_kx_{kk}\Big)^2\Big]\\
   <& ~0,
\end{aligned}$$
  which means that $ I_N\in G_\varepsilon$. In fact, we have proven $f_{I_N}(x)<-\frac13$ for any $ x\in\Delta_\epsilon$. \hfill  $\qed$

 \textbf{Proof of (b)}.
 Since $F$ is uniformly continuous on
$\triangle_{\varepsilon}\times SO(N)$, the function $g(Q):=\max_{x\in\Delta_{\varepsilon}}F(x,Q)$ is continuous on $Q\in SO(N)$ and thus $G_{\varepsilon}$ is obviously open as the preimage of an open set $(-\infty, 0)$ of $g$. \hfill  $\qed$

\textbf{Proof of (c)}.
  We only need to prove the following \textbf{a priori estimate}: \textit{Suppose $f_Q\(x\)\leq0$ for every $x\in\Delta_\varepsilon$. Then $f_Q\(x\)<0$ for every $x\in\Delta_\varepsilon$.} Provided with this, for a sequence $\{Q_k\}\subset G_{\varepsilon}$ which converges to a $Q\in SO(N)$, we have
  $$g(Q)=\lim_{k\rightarrow\infty}g(Q_k)=\lim_{k\rightarrow\infty}\max_{x\in\Delta_{\varepsilon}}F(x,Q_k)\leq0.$$
  Therefore, $f_Q\(x\)\leq g(Q)\leq0$ for every $x\in\Delta_\varepsilon$. Then $f_Q\(x\)<0$ for every $x\in\Delta_\varepsilon$ and thus $Q\in G_{\varepsilon}$, proving the closedness of $G_{\varepsilon}$.

  The proof of this estimate is as follows: If there is a point $y\in\Delta_\varepsilon$ such that $f_Q\(y\)=0$, we can arrange the entries decreasingly and assume without loss of generality that for some $1\leq\gamma\leq N$,
  \begin{equation}\label{ydelta}
  y\in\Delta_\varepsilon^\gamma:=\{x\in\Delta_\varepsilon \mid x_\alpha>\varepsilon \text{ for } \alpha\leq\gamma,\text{ and } x_\beta=
  \varepsilon \text{ for } \beta>\gamma\}.
  \end{equation}
   With the given prerequisites, then $y$ is a maximum point of $f_Q\(x\)$ in the cone spanned by $\Delta_\varepsilon$ and an interior maximum point of $f_Q\(x\)$ in $\Delta_\varepsilon^\gamma$.
      Hence, applying the Lagrange Multiplier Method, there exist numbers
$b_{\gamma+1},\cdots,b_N$ and a number $a$ such that
\begin{equation}\label{partial f}
\begin{array}{ll}
\Big(\frac{\partial f_Q}{\partial x_1}(y),\cdots,\frac{\partial
f_Q}{\partial x_{\gamma}}(y)\Big)=2a(1,\cdots,1),&\\
\Big(\frac{\partial f_Q}{\partial
x_{\gamma+1}}(y),\cdots,\frac{\partial f_Q}{\partial
x_{N}}(y)\Big)=2(b_{\gamma+1},\cdots,b_N)&
\end{array}
\end{equation}
or equivalently
\begin{equation}\label{partial f 2}
\sum_{\beta=1}^Ny_{\beta}(\|[\check{Q}_{\alpha},
\check{Q}_{\beta}]\|^2)-\frac{4}{3}=\Big\{
\begin{array}{ll}
 a & \alpha\leq\gamma,\\
b_{\alpha}& \alpha>\gamma.
\end{array}
\end{equation}
Hence
$$
f_Q(y)=\Big(\sum_{\alpha=1}^{\gamma}y_{\alpha}\Big)a+\Big(\sum_{\alpha=\gamma+1}^Nb_{\alpha}\Big)\varepsilon
=0\quad \text{and} \quad
\sum_{\alpha=1}^{\gamma}y_{\alpha}+(N-\gamma)\varepsilon=1.
$$
Meanwhile, by the homogeneity of $f_Q$ we see $\frac{\partial f_Q}{\partial \nu}(y)=2(a\gamma
+\sum_{\alpha=\gamma+1}^Nb_{\alpha})\leq 0$, where
$\nu=(1,\cdots,1)$ is the vector normal to $\Delta$ in
$\mathbb{R}^N$. For any sufficiently small $\varepsilon$ (such as
$\varepsilon<1/N$), it follows from the last three formulas that
$a\geq 0$. Without loss of generality, we assume
$y_1=max\{y_1,\cdots,y_{\gamma}\}>\varepsilon$. Let $$J:=\left\{\beta\in
S~|~ \|[\check{Q}_{1}, \check{Q}_{\beta}]\|^2\geq \frac{4}{3}\right\},$$
and let $n_1$ be the number of elements of $J$.
 Now combining Lemma \ref{lem3}, Lemma \ref{lem4} and (\ref{partial f 2}) will give a
contradiction as follows:

\begin{equation}\label{contrad}
\begin{aligned}
\frac43\leq\frac43+a&=\sum^N_{\beta=2}y_\beta\left\|\[\check{Q}_1,\check{Q}_\beta\]\right\|^2\\
                    &=\sum_{\beta\in J}y_\beta\(\left\|\[\check{Q}_1,\check{Q}_\beta\]\right\|^2-\frac43\)+\frac43\sum_{\beta\in J}y_\beta+\sum_{\beta\in S\backslash J}y_\beta\left\|\[\check{Q}_1,\check{Q}_\beta\]\right\|^2\\
                    &\leq y_1\sum_{\beta\in J}\(\left\|\[\check{Q}_1,\check{Q}_\beta\]\right\|^2-\frac43\)+\frac43\sum_{\beta\in J}y_\beta+\sum_{\beta\in S\backslash J}y_\beta\left\|\[\check{Q}_1,\check{Q}_\beta\]\right\|^2\\
                    &\leq \frac43y_1+\frac43\sum_{\beta\in J}y_\beta+\sum_{\beta\in S\backslash J}y_\beta\left\|\[\check{Q}_1,\check{Q}_\beta\]\right\|^2\\
                    &\leq\frac43\sum^N_{\beta=1}y_\beta=\frac43.
\end{aligned}
\end{equation}
Thus $a=0$, the third line shows $y_{\beta}=y_1$ for $\beta\in J$ and the fourth line shows $\sum_{\beta\in J}\(\left\|\[\check{Q}_1,\check{Q}_\beta\]\right\|^2-\frac43\)=\frac43$, hence
\begin{equation}\label{n1N}
\sum_{\beta\in J}\|[\check{Q}_1,
\check{Q}_{\beta}]\|^2=\frac{4}{3}(n_1+1)\leq 2n<\frac{4}{3}N \quad (n\geq2).
\end{equation}
Hence $S\backslash(J\cup\{1\})\neq\emptyset$, and the last ``$\leq$" in
(\ref{contrad}) should be ``$<$" by the definition of $J$ and the
positivity of $y_{\beta}$ for $\beta\in S\backslash(J\cup\{1\})$.\hfill
$\Box$

Now we consider the equality condition of (1) of Theorem \ref{DDVVHermthm} in view of the proof of the a priori estimate. When $f_Q(y)=0$ for some $y\in\Delta$ and $Q\in SO(N)$, we have also (\ref{ydelta}-\ref{n1N}) with $\varepsilon=a=0$, all inequalities in (\ref{contrad}) and thus in Lemmas \ref{lem2} and \ref{lem3} achieve the equality. For the eigenvalues $\{\lambda_1,\ldots, \lambda_n\}$ of $\check{Q}_1$ in (\ref{contrad}), because of the equality condition of Lemma \ref{lem2}, $\lambda_1=-\lambda_n=\frac1{\sqrt{2}}$, $\lambda_2=\cdots=\lambda_{n-1}=0$, $n_0=1$ and $I=\{(1,n)\}$. Without loss of generality, let $\check{Q}_1=\frac1{\sqrt{2}}(E_{11}-E_{22})$ (replace the index $n$ with $2$ for simplicity and $I=\{(1,2)\}$ now). In the proof of Lemma \ref{lem3} where $J_{\alpha}=J$, the fifth line of (\ref{ineq-lem3}) shows $q_{\gamma\beta}=q_{\tau\beta}=0$ for $\gamma\hat{=}(1,2)\in I$, $\tau\hat{=}(2,1)$ and $\beta\in S\backslash J$; the fourth line of (\ref{ineq-lem3}) shows $q_{\gamma\beta}=q_{\tau\beta}=0$ for any $\gamma\in S\backslash \{(1,2),(2,1)\}$ and $\beta\in J$. Then we have $n_1=\#J=2$ since $Q$ is orthogonal (its column vectors and row vectors are orthonormal).
Moreover, for any $\beta\in J$, $\operatorname{rank}(\check{Q}_\beta)=2$, $\check{q}_{ij}^\beta=0$ for $(i,j)\in S\backslash \{(1,2),(2,1)\}$, and  $\check{q}_{12}^\beta=\overline{\check{q}_{21}^\beta}$ with norm $\frac{1}{\sqrt{2}}$. These yield that the two perpendicular matrices $\check{Q}_\beta$ ($\beta\in J$) should be in the form of $diag(H_2,0)$ and $diag(H_3,0)$ up to a rotation.
Notice also that by (\ref{contrad}), $y_{\beta}=0$ for $\beta\in S\backslash (J\cup\{1\})$ and $y_\beta=y_1=\frac{1}{3}$  for $\beta\in J$.

So far we have proven that  $f_Q(x)=0$ ($x\in\mathbb{R}^N_+$) if and only if in the orthonormal basis $\{\check{Q}_{\alpha}\}_{\alpha\in S}$ of $HM(N)$ determined by $Q\in O(N)$, there exist three of them, say $\check{Q}_1,\check{Q}_2,\check{Q}_3$, such that $x_1=x_2=x_3\geq0$, $x_{\alpha}=0$ for $3<\alpha\leq N$, and for some $P\in U(n)$, $P^*\check{Q}_iP=diag(H_i,0)$ for $i=1,2,3$ ($\lambda=\frac{1}{\sqrt{2}}$ in $H_i$) up to a simultaneous rotation of the three matrices.

Now let $B_1,\cdots,B_m$ ($m\geq3$) be Hermitian matrices achieving the equality with the corresponding matrices $B\in M(N,m)$, $Q\in O(N)$ satisfying (\ref{BiB}, \ref{Qeq}, \ref{BBdiag}) and the equality conditions in the last paragraph. Then we have $Q^tBB^tQ=diag(cI_3,0)$ for some $c>0$ ($c=0$ only if $B_i=0$ for all $i=1,\ldots,m$). Hence
$Q^tB=\sqrt{c}(a_1,a_2,a_3,0,\cdots,0)^t\in M(N,m)$ for some orthonormal column vectors $a_i$'s in $\mathbb{R}^m$.
Thus there is an orthogonal matrix $R\in O(m)$ (by extending $a_i$'s to an orthonormal basis $\{a_i|i=1,\ldots,m\}$ of $\mathbb{R}^m$ and taking $R=(a_1,\ldots,a_m)\in O(m)$) such that
$$Q^tBR=\sqrt{c}\begin{pmatrix}
I_3 & 0 \\
0 & 0 \\
\end{pmatrix}\in M(N,m).$$
It follows from (\ref{BiB}, \ref{Qeq}) that
$$\(B_1,\cdots,B_m\)R=\(\check{E}_1,\cdots,\check{E}_N\)BR=\(\check{Q}_1,\cdots,\check{Q}_N\)Q^tBR=\sqrt{c}\(\check{Q}_1,\check{Q}_2,\check{Q}_3,0,\cdots,0\).$$
Recalling the properties of $\check{Q}_i$'s ($i=1,2,3$), we have completed the proof of the equality condition of (1) of Theorem \ref{DDVVHermthm}.

%%%%%%%%%%%%%%%%%%%%%%%
\begin{ack}
The authors are very grateful to Professor Zhiqin Lu for sending the preprint paper \cite{LW11} and valuable discussions.  Many thanks are due to the referee for helpful comments which greatly improved the presentation. The first author would also thank Tsinghua Sanya International Mathematics Forum where a part of the work was done during his attendance in the Young Geometric Analysts' Forum.
\end{ack}
%%%%%%%%%%%%%%%%%%%%%%%

%%%%%%%%%%%%%%%%%%%%%%%%%%%%%%%%%%%%%%%%%%%%

\end{document}